\theoremstyle{plain}
\newtheorem*{theorem*}{Theorem}
\newtheorem*{theoremA}{Theorem A}
\newtheorem*{theoremB}{Theorem B}
\newtheorem{theorem}{Theorem}
\newtheorem{lemma}[theorem]{Lemma}
\theoremstyle{definition}
\newtheorem*{definition}{Definition}
\newcommand{\C}{\mathbb{C}}
\renewcommand{\H}{\mathbb{H}}
\newcommand{\N}{\mathbb{N}}
\newcommand{\R}{\mathbb{R}}
\newcommand{\E}{\mathcal{E}}
\newcommand{\B}{\mathcal{B}}
\newcommand{\EP}{\hat{\E^\prime}(\R)}
\newcommand{\Int}{\operatorname{Int}}
\newcommand{\Hol}{\operatorname{Hol}}
\newcommand{\lil}{\lambda\in\Lambda}
\renewcommand{\Im}{\operatorname{Im}}
\title[Interpolation in $\mathcal B$]
{Interpolation sequences for the Bernstein algebra}
\author[X. Massaneda]{Xavier Massaneda}
\address{Departament de Matem\`atica Aplicada i An\`alisi,
Universitat  de Bar\-ce\-lo\-na, Gran Via 585, 08071-Bar\-ce\-lo\-na, Spain}
\email{xavier.massaneda@ub.edu}
\author[J. Ortega-Cerd\`a]{Joaquim Ortega-Cerd\`a}
\address{Departament de Matem\`atica Aplicada i An\`alisi,
Universitat  de Bar\-ce\-lo\-na, Gran Via 585, 08071-Bar\-ce\-lo\-na, Spain}
\email{jortega@ub.edu}
\thanks{Both authors supported by DGICYT grant 
MTM2005-008984-C02-02 and the CIRIT grant 2005-SGR 00611.}
\date{\today}
\keywords{interpolation sequence, Bernstein spaces}
\subjclass{30E05, 42A85}
\begin{document}

\maketitle
\begin{abstract} 
We give a description, in analytic and geometric terms, of the interpolation sequences for the algebra of entire functions  of exponential type which are bounded on the real line.
\end{abstract}

\begin{center}
\small\textit{Dedicated to Victor Petrovich Havin in his 75 birthday}
\end{center}

\section{Introduction}

In this paper we describe interpolation sequences for the \emph{Bernstein algebra} $\B$  of  entire functions  of exponential type which are bounded on the real line. 

The space $\B$ consists of the entire functions $f$ such that for some $A,B>0$
\[
\log|f(z)|\leq A+B|\Im z|\qquad z\in\C\ .
\]
It can also be viewed as the union of the \emph{Bernstein spaces} $B_\sigma$, $\sigma>0$, of entire functions $f$ such that
\begin{equation*}
\sup_{z\in\C} |f(z)| e^{-\sigma|\Im z|}<\infty\ ,
\end{equation*}
which are precisely the bounded functions with Fourier-Laplace transforms supported in $[-\sigma,\sigma]$.

We were motivated to study the interpolation in $\B$ by the recent characterisation of zero sets in the same class obtained by Sergei Ju. Favorov. 

Given a discrete sequence $\Lambda\subset\C$ and a point $z\in\C$, consider the counting function  $n_\Lambda(z,t)=\#(\Lambda\cap \overline{D(z,t)})$, $t>0$.

\begin{theorem*}\cite[Theorem 2.2]{Fa} A sequence
$\Lambda$ is the zero set of a function in $\B$ if and only if:
\begin{enumerate}
\item [(a)] $\exists \lim\limits_{R\to\infty}\sum\limits_{\lambda\in \Lambda\cap D(0,R)\setminus\{0\}} \lambda^{-1}$
\item [(b)] $n_\Lambda(0,t)=O(t)$, $t\to\infty$
\item [(c)] $n_\Lambda(0,t+1)-n_\Lambda(0,t)=o(t)$, $t\to\infty$
\item [(d)] $\sup\limits_{x\in\R}
\displaystyle{
\int_{0}^\infty [n_\Lambda(b,t)-n_\Lambda(x,t)]\frac{dt}t}<\infty$ for some $b\in\R\setminus\Lambda$.
\end{enumerate}
\end{theorem*}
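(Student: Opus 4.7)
The plan is to prove both directions of the equivalence using the canonical product
\[
P_\Lambda(z)=z^m\lim_{R\to\infty}\prod_{0<|\lambda|<R}\Bigl(1-\frac{z}{\lambda}\Bigr),
\]
whose very existence is what (a) provides. The decisive computation throughout is the identity
\[
\log\Bigl|\frac{P_\Lambda(x)}{P_\Lambda(b)}\Bigr|=\int_0^\infty\bigl[n_\Lambda(b,t)-n_\Lambda(x,t)\bigr]\frac{dt}{t},\qquad x,b\in\R\setminus\Lambda,
\]
obtained from the pointwise rewriting $\log|1-x/\lambda|-\log|1-b/\lambda|=\log|x-\lambda|-\log|b-\lambda|$ and a layer-cake integration.

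For the necessity direction I would use that $\B$ is contained in the Cartwright class, so Cartwright's theorem yields a Hadamard factorization $f(z)=Cz^m e^{iaz}P_\Lambda(z)$ with $a\in\R$ (the real part of the linear exponent must vanish because $f$ is bounded on $\R$). This factorization gives (a) immediately. Condition (b) is the classical equivalence between finite exponential type and linear upper density of zeros (Lindel\"of). For (d), $|e^{iax}|=1$ on $\R$, so the identity above combined with $\log|f(x)|\leq A$ yields the uniform upper bound. For (c) one applies Poisson--Jensen on the unit disk $D(x,1)$: the $\B$-growth controls the boundary average uniformly in $x$, which bounds $\int_0^1 n_\Lambda(x,t)\,dt/t$ and, after translation and a standard extraction argument, forces $n_\Lambda(0,t+1)-n_\Lambda(0,t)=o(t)$.

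For sufficiency, (a) ensures convergence of $P_\Lambda$, and (b) forces it to have finite exponential type (a standard indicator-function estimate in terms of the linear upper density of $\Lambda$). The bound on $\R\setminus\Lambda$ is immediate from the identity and (d), so $\sup_{x\in\R\setminus\Lambda}\log|P_\Lambda(x)|<\infty$. Condition (c) is then used to upgrade this to a full $\B$-type bound in $\C$: it guarantees that zeros do not cluster in unit-width annuli faster than sub-linearly, which prevents $\log|P_\Lambda|$ from developing unbounded local excursions in horizontal strips around the real axis as $|x|\to\infty$, and allows one to pass from the pointwise bound on $\R\setminus\Lambda$ to a genuine bound on $|P_\Lambda(z)|e^{-\sigma|\Im z|}$ for some $\sigma>0$.

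The main obstacle, in both directions, is condition (c). The crude upper density in (b) is too weak to capture the necessary regularity, and bridging between the pointwise upper bound on $\log|f|$ on $\R$ and annular counts of zeros demands a careful Poisson--Jensen analysis on the family of unit disks centred on the real axis, together with the uniform $\B$-bound in the strip $\{|\Im z|\leq 1\}$. The sufficiency direction requires, in addition, a quantitative comparison between the indicator function of $P_\Lambda$ and the sub-linear annular counts, which is where (c) feeds back into the strip estimate.
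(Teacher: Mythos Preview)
The paper does not contain a proof of this statement. The theorem is quoted in the Introduction, with attribution \cite[Theorem~2.2]{Fa}, purely as motivation: the authors cite Favorov's characterisation of zero sets in $\B$ and then turn to the interpolation problem, which is the actual subject of the paper. Nowhere in the body of the paper is Favorov's theorem revisited or proved; the only results proved here are Theorem~A, Theorem~B, Lemma~\ref{weak-separation}, Theorem~\ref{union}, and Lemma~\ref{F}.

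Consequently there is no ``paper's own proof'' to compare your proposal against. Your sketch---factoring through the Cartwright class, using the canonical product $P_\Lambda$, and the layer-cake identity linking $\log|P_\Lambda(x)/P_\Lambda(b)|$ to $\int_0^\infty[n_\Lambda(b,t)-n_\Lambda(x,t)]\,dt/t$---is the natural line of attack for such a result, and you are right to flag condition~(c) as the delicate point in both directions. But any detailed assessment of whether your handling of~(c) succeeds would have to be made against Favorov's original argument in \cite{Fa}, not against anything in this paper.
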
 

Notice that a subset of a zero set $\Lambda$ of $\B$ is not necessarily a zero set of $\B$, since  conditions (a) and (d) imply a certain regularity in the distribution of $\Lambda$ that can be destroyed when removing points.

In view of this resulted we ask ourselves whether interpolation sequences for $\B$ can be described in similar terms.
Given the characteristic growth of functions in $\B$, the natural interpolation problem for this algebra is the following.

\begin{definition} A discrete sequence $\Lambda$ in $\C$ is called an \emph{interpolation sequence} for
$\B$ if for every sequence of values $\{v_\lambda\}_{\lil}$ with  
\begin{equation}\label{values}
\sup_{\lil} |v_\lambda|
e^{-C|\Im\lambda|} <\infty 
\end{equation}
for some $C>0$, there exists $f\in \B$ with 
\[
f(\lambda)=v_\lambda\qquad \lil\ \ .
\]
\end{definition}

Letting $\B(\Lambda)$ denote the space of sequences 
$\{v_\lambda\}_{\lil}$ satisfying \eqref{values} for some $C>0$, we
can equivalently define interpolating sequences as those such that the
restriction operator
\[ 
\begin{split}
\mathcal R_\Lambda : \B &\longrightarrow \B(\Lambda)\\
 f &\ \mapsto\; \{f(\lambda)\}_{\lil}
\end{split} 
\]
is onto.

Applying the open mapping theorem to $\mathcal R_\Lambda$ \cite[Lemma 2.2.6]{Br-Ga} one sees that the interpolation is stable in the following sense: for each $C>0$ there exist $M,\sigma>0$ such that if $\{v_\lambda\}_{\lil}$ satisfies \eqref{values} then there exists $f\in\B$ with $f(\lambda)=v_\lambda$, $\lil$, and $\sup_{z\in\C}|f(z)|e^{-\sigma|\Im z|}\leq M \sup_{\lil} |v_\lambda| e^{-C|\Im\lambda|}$.

There is an alternative (and equivalent) definition of $\B$-interpolation:

\begin{definition}  A sequence $\Lambda$ is \emph{free interpolation} (denoted
$\Lambda \in Int_{\ell^\infty}\,\B$) if the trace $\mathcal R_\Lambda(\B)$ is \emph{ideal}, that is, if $\ell^\infty\cdot \mathcal R_\Lambda(\B)\subset \mathcal R_\Lambda(\B)$. 

Thus,  $\Lambda \in Int_{\ell^\infty}\,\B$ if and only if whenever $f\in\B$ and $\{\alpha_\lambda\}\in\ell^\infty$ there exists $g\in\B$ such that $g(\lambda)=\alpha_\lambda f(\lambda)$, $\lil$. Since $\B$ is an algebra, this happens if and only if
for any sequence of values $\{\alpha_\lambda\}_{\lil}\in\ell^\infty$ there exists
$f\in \B$ with  $f(\lambda)=\alpha_\lambda$, $\lil$.
\end{definition}

There exists a general theory (and in particular an analytic description of interpolating sequences) for the algebras
\[
 A_p=\bigl\{f\in\Hol(\C) :\ \exists C>0\, :\, \sup_{z\in\C}|f(z)|e^{-Cp(z)}<\infty\bigr\}\ ,
\]
where $p:\C\longrightarrow\R_+$ is a subharmonic weight such that:
\begin{itemize}
 \item [(a)] $\log(1+|z|^2)=O(p(z))$,
 \item[(b)] there are constants $C,D>0$ such that whenever $|\zeta-z|\leq 1$ then $p(\zeta)\leq Cp(z)+D$.
\end{itemize}

For this we refer to \cite[Chapter 2]{Br-Ga}.

Notice that $\B$ does not fall into this general theory, since $p(z)=|\Im z|$ does not satisfy (a) (in particular, $\B$ does not contain polynomials). However, the techniques used in the description of $A_p$-interpolation mentioned above (see \cite[Corollary 3.5]{Br-Li}) can be adapted to obtain an analogous result for $\B$ .

\begin{theoremA}\label{analytic} 
The following statements are equivalent:

\begin{enumerate}
\item[(a)] $\Lambda \in Int\,\B$

\item[(b)] $\Lambda \in Int_{\ell^\infty}\,\B$

\item[(c)] There exist $A,B,C>0$ and peak-functions $f_\lambda\in \B$ such that $f_\lambda(\lambda')=\delta_{\lambda,\lambda'}$ and
\[
\sup_{z\in \C} |f_{\lambda}(z)|e^{-C |\Im z|}\leq A e^{B|\Im \lambda|}\ .
\]

\item[(d)] There exist $F\in \B$ and $\varepsilon,C>0$ such that $F(\lambda)=0$ for all $\lil$ and
\[
|F'(\lambda)|\geq \varepsilon e^{-C|\Im \lambda|}\ .
\]

\end{enumerate}

\end{theoremA}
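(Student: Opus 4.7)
The plan is to run the cycle $(a)\Rightarrow(b)\Rightarrow(c)\Rightarrow(d)\Rightarrow(a)$, following the adaptation to $\B$ of the $A_p$-scheme of \cite[Corollary 3.5]{Br-Li}.

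The equivalence $(a)\Leftrightarrow(b)$ is elementary. The direction $(a)\Rightarrow(b)$ holds because any bounded sequence satisfies \eqref{values} with $C=0$. For $(b)\Rightarrow(a)$ one uses that $\B$ is an algebra containing $e^{\pm iCz}$: given $v_\lambda$ with $|v_\lambda|\le Me^{C|\Im\lambda|}$, the truncated sequences $v_\lambda e^{+iC\lambda}\mathbf{1}_{\Im\lambda>0}$ and $v_\lambda e^{-iC\lambda}\mathbf{1}_{\Im\lambda\le 0}$ are bounded, so $(b)$ provides interpolants $g^\pm\in\B$, and $f=g^+e^{-iCz}+g^-e^{iCz}\in\B$ interpolates $v_\lambda$. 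Next, $(b)\Rightarrow(c)$ is produced by feeding the basis sequence $\delta_{\lambda,\lambda_0}$ into $(b)$ and reading off the bound on $f_{\lambda_0}$ from the stability statement recorded in the introduction. The direction $(d)\Rightarrow(c)$ is also direct: $f_\lambda(z)=F(z)/[F'(\lambda)(z-\lambda)]$ is entire, satisfies $f_\lambda(\lambda')=\delta_{\lambda,\lambda'}$, and inherits the required growth bound from $|F(z)|\le A e^{B|\Im z|}$, the lower estimate on $|F'(\lambda)|$, and a Cauchy estimate on a unit disk around $\lambda$.

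The substantive analytic content is in $(c)\Rightarrow(d)$ and $(d)\Rightarrow(a)$; both are accomplished by the same weighted $\bar\partial$-correction. For $(d)\Rightarrow(a)$, form the almost-holomorphic candidate
\[
\varphi(z)=\sum_{\lambda\in\Lambda} v_\lambda\,\frac{F(z)\chi(z-\lambda)}{F'(\lambda)(z-\lambda)},
\]
with $\chi$ a fixed smooth cutoff supported near $0$ and equal to $1$ in a smaller neighborhood, and seek $u$ solving $\bar\partial u=F^{-1}\bar\partial\varphi$ in an appropriate weighted $L^2$-space; then $f=\varphi-Fu$ is entire and interpolates. For $(c)\Rightarrow(d)$, apply the parallel construction to $F_0(z)=\sum_\lambda(z-\lambda)f_\lambda(z)\chi(z-\lambda)$, which vanishes with derivative $1$ at each $\lambda$, adding to the weight a term $\sum_\lambda\chi(z-\lambda)\log|z-\lambda|$ so that the resulting $u$ vanishes on $\Lambda$; then $F=F_0-u$ has the required zeros and derivative bound.

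The main obstacle in both $\bar\partial$-steps is that the weight $p(z)=|\Im z|$ is not $A_p$-admissible: it fails the growth condition $\log(1+|z|^2)=O(p(z))$ and hence does not satisfy the hypotheses under which Hörmander's $L^2$-estimates are usually invoked in this context. The workaround is to prove the $\bar\partial$-estimate for the perturbed admissible weight $\phi_\kappa(z)=(\sigma+\varepsilon)|\Im z|+\kappa\log(1+|z|^2)$, and then to observe that the polynomial prefactor $(1+|z|^2)^\kappa$ that appears in the resulting $L^2$-bound is harmless, because $\B$ absorbs polynomial-exponential growth: any function bounded by $C(1+|z|^2)^N e^{\sigma'|\Im z|}$ lies in $\B$. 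This absorption is the step that lets the standard $A_p$-scheme carry over to the Bernstein setting despite the failure of condition (a) for the weight.
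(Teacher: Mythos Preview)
Your treatment of the ``easy'' implications is fine and matches the paper in spirit; in particular $(b)\Rightarrow(c)$ via the Baire/open-mapping stability and $(d)\Rightarrow(c)$ via $f_\lambda(z)=F(z)/[F'(\lambda)(z-\lambda)]$ are both correct. The problem is in your $\bar\partial$-route for $(c)\Rightarrow(d)$ and $(d)\Rightarrow(a)$.

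The claim that ``$\B$ absorbs polynomial-exponential growth: any function bounded by $C(1+|z|^2)^N e^{\sigma'|\Im z|}$ lies in $\B$'' is false. The polynomial $z^N$ satisfies such a bound with $\sigma'=0$ but is unbounded on $\R$, hence not in $\B$. This is exactly the point at which the weight $p(z)=|\Im z|$ differs from genuine $A_p$-weights: condition (a) of the $A_p$-axioms, $\log(1+|z|^2)=O(p(z))$, is what allows one to trade the polynomial factor coming out of H\"ormander's $L^2$-estimate for a harmless increase in the exponential type. With $p(z)=|\Im z|$ there is no such trade, so the H\"ormander solution $u$ (and hence $f=\varphi-Fu$ or $F=F_0-u$) is only known to satisfy a bound of the form $(1+|z|^2)^N e^{\sigma'|\Im z|}$, which does \emph{not} force membership in $\B$. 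A secondary issue is that a \emph{fixed} cutoff $\chi$ cannot be used: condition (c) implies only weak separation $|\lambda-\lambda'|\gtrsim e^{-\alpha|\Im\lambda|}$, so the supports of $\chi(\cdot-\lambda)$ may overlap; this is reparable with shrinking cutoffs, but the absorption gap above is not.

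The paper sidesteps the $\bar\partial$-problem entirely by replacing the smooth cutoffs with \emph{holomorphic} weights that already live in $\B$. For $(c)\Rightarrow(d)$ it sets
\[
F(z)=\sum_{\lambda\in\Lambda} w_\lambda(z)\,g_\lambda(z),\qquad
w_\lambda(z)=\Bigl(\tfrac{\sin(z-\lambda)}{z-\lambda}\Bigr)^{3} e^{i\sigma_\lambda M(z-\lambda)},\quad
g_\lambda(z)=f_\lambda^2(z)\sin(z-\lambda),
\]
where $\sigma_\lambda=\mp 1$ according to the sign of $\Im\lambda$. Here $w_\lambda\in\B$ with $w_\lambda(\lambda)=1$ and $|w_\lambda(z)|\lesssim e^{(M+3)|\Im z|}e^{-(M-3)|\Im\lambda|}/(1+|z-\lambda|^3)$; the cubic decay together with the weak-separation estimate $\sum_\lambda e^{-C|\Im\lambda|}/(1+|z-\lambda|^3)<\infty$ gives convergence directly in $\B$. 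The factor $f_\lambda^2\sin(z-\lambda)$ makes $F$ vanish on $\Lambda$ with $F'(\lambda)=1$. For $(d)\Rightarrow(a)$ the same $w_\lambda$ is used in the Lagrange-type series $f(z)=\sum_\lambda v_\lambda w_\lambda(z)\,F(z)/[F'(\lambda)(z-\lambda)]$. The point is that the sine-type factors simultaneously play the localising role of your cutoffs and keep every partial sum inside $\B$, so no correction step --- and no passage through a polynomially-weighted space --- is needed.
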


As soon as the analytic conditions of Theorem A hold we can proceed as in \cite{MOO} and
obtain  geometric conditions. That paper deals with $A_p$-interpolation for weights of the form
\[
 p(z)=|\Im z|+\omega(|z|)\ ,
\]
being $\omega(t)$  a subadditive increasing continuous function, normalized with $\omega(0)=0$ such that $\log(1+t)\lesssim \omega(t)$ and $\int_0^\infty\frac{\omega(t)}{1+t^2}dt<\infty$. Taking $p(z)=|\Im z|$ (i.e. $\omega\equiv0$) and using Theorem A we can mimic the proof of \cite[Theorem 1]{MOO} and obtain a geometric characterisation.

In order to state this result consider the counting function $n_\Lambda(z,r)$ defined previously
and the integrated version
\[
N_\Lambda(z,r)=\int_0^r\frac{n_\Lambda(z,t)-n_\Lambda(z,0)}t\, dt + n_\Lambda(z,0)\log r .
\]

Hereinafter we assume that $\Lambda\cap \R=\emptyset$; otherwise replace $\R$ by any the horizontal line 
that does not contain any of the points in $\Lambda$.

\begin{theoremB}\label{teorema}
The sequence $\Lambda$ is $\B$-interpolation if 
and only if:
\begin{itemize}
\item[(i)] There is $C>0$ such that 
\begin{equation*}
N_\Lambda(\lambda,  |\Im \lambda| ) \le C |\Im \lambda|\qquad \forall 
\lambda\in\Lambda.
\end{equation*}

\item[(ii)] The following Carleson-type condition holds
\[
\sup_{x\in\R}
\sum\limits_{\lambda\in\Lambda}
\frac{|\Im\, \lambda|}{|x-\lambda|^2}
<\infty .
\]
\end{itemize}
\end{theoremB}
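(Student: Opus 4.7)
I will deduce Theorem B from the analytic description in Theorem A, mimicking the proof of \cite[Theorem 1]{MOO} specialised to the weight $p(z)=|\Im z|$ (i.e.\ $\omega\equiv 0$ in the notation of \cite{MOO}). The two directions will be handled using different equivalent analytic characterisations.

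For the necessity, assume $\Lambda\in Int\,\B$ and fix $F\in\B$ as in Theorem A(d), vanishing exactly on $\Lambda$ with $|F'(\lambda)|\geq\varepsilon e^{-C|\Im\lambda|}$. I will apply Jensen's formula to $F(z)/(z-\lambda)$ on the disk $D(\lambda,|\Im\lambda|)$: the bound $\log|F(z)|\leq A+B|\Im z|$ controls the boundary mean by a multiple of $|\Im\lambda|$, while $\log|F'(\lambda)|\geq -C|\Im\lambda|-\log(1/\varepsilon)$ controls the value at the centre, and rearranging the Jensen identity produces exactly (i). For (ii), I will use the peak functions $f_\lambda$ of Theorem A(c) together with a testing argument: evaluating at a fixed $x\in\R$ and applying the stability estimate furnished by the open mapping theorem to carefully chosen bounded data yields a uniform bound on $\sum_\lambda |f_\lambda(x)|^2$; a short factorisation argument, using the peak property and the uniform upper bound on $f_\lambda$, forces $|f_\lambda(x)|\gtrsim|\Im\lambda|/|x-\lambda|^2$, from which (ii) follows.

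For the sufficiency, assuming (i) and (ii), I will construct an $F\in\B$ realising Theorem A(d) as a Weierstrass canonical product
\[
F(z)=\lim_{R\to\infty}\prod_{\lambda\in\Lambda,\,|\lambda|\leq R}\Bigl(1-\frac{z}{\lambda}\Bigr)\exp\Bigl(\frac{z}{\lambda}\Bigr),
\]
after, if necessary, symmetrising $\Lambda$ with respect to $\R$ in order to kill the imaginary part of $\sum\lambda^{-1}$. Condition (ii) implies that $\log|F|$ is bounded on $\R$ through the identity $\log|1-x/\lambda|+\Re(x/\lambda)\asymp -|\Im\lambda|^{2}/|x-\lambda|^{2}$ valid for $|\lambda|\gg|x|$. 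A Phragm\'en--Lindel\"of argument, based on subharmonicity of $\log|F(z)|-B|\Im z|$, will upgrade this to $\log|F(z)|\lesssim 1+|\Im z|$ on $\C$, so that $F\in\B$. The lower bound $|F'(\lambda)|\geq\varepsilon e^{-C|\Im\lambda|}$ comes from applying Jensen in the opposite direction on $D(\lambda,|\Im\lambda|)$, where the counting control (i) dominates the contribution of the other zeros of $F$ in the disk.

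The main obstacle will be the sufficiency half, and in particular the interplay between (i), (ii) and the balance condition on $\sum\lambda^{-1}$ required for the canonical product to converge. Conditions (i) and (ii) do not encode this balance directly, so a symmetrisation or a judicious choice of auxiliary zeros is needed, and one must verify that this modification neither destroys (i) nor (ii) and that it preserves the sharp two-sided bounds both on $\R$ (needed for $F\in\B$) and at the points of $\Lambda$ (needed for the lower bound on $|F'(\lambda)|$). Lifting the real-line bound to the whole of $\C$ via Phragm\'en--Lindel\"of is also delicate, since the product is only conditionally convergent and the cancellation has to be tracked carefully.
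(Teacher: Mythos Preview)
Your necessity argument for (i) is essentially the paper's. For (ii), however, the paper does not use any testing or duality argument: it passes to the functions $g_\lambda(z)=f_\lambda(z)e^{iCz}$, which are bounded in the upper half-plane, applies Jensen's formula in $\H$ to obtain the Blaschke-type estimate $\sum_{\lambda'\neq\lambda,\ \Im\lambda'>0}\log\bigl|(\lambda-\lambda')/(\lambda-\bar\lambda')\bigr|^{-1}\lesssim\Im\lambda$, and then uses $1-t\le\log t^{-1}$. Your route through a lower bound $|f_\lambda(x)|\gtrsim|\Im\lambda|/|x-\lambda|^2$ has a genuine gap: nothing in Theorem A(c) prevents $f_\lambda$ from vanishing at, or arbitrarily near, a given $x\in\R$, so no pointwise lower bound on $|f_\lambda(x)|$ is available from the peak property and the uniform upper estimate alone.

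The sufficiency half is where your proposal diverges most from the paper and runs into real trouble. The paper does \emph{not} build a single canonical product over $\Lambda$. It first splits $\Lambda=\Lambda_+\cup\Lambda_-$ by the sign of $\Im\lambda$ and reduces, via the union theorem for weakly separated interpolating sequences, to showing each half is in $\Int\,\B$. For $\Lambda_+$ the function $F$ is obtained as $F=hG$, where $G$ is any entire function with zero set $\Lambda_+$, and $h$ is a multiplier produced by the Ortega-Cerd\`a--Seip lemma from the subharmonic weight $\Psi=N|\Im z|-\phi$, with $\phi=\log|G/B|$ in $\H$ (here $B$ is the Blaschke product of $\Lambda_+$) and $\phi=\log|G|$ below. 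Condition (ii) enters precisely to bound $\Delta\phi$ on $\R$ so that $\Delta\Psi\simeq dx$ and the multiplier lemma applies; condition (i) then gives the lower bound on $|F'(\lambda)|$ via the minimum principle on small discs. Your direct canonical-product approach stumbles at the first step: the claimed asymptotic $\log|1-x/\lambda|+\Re(x/\lambda)\asymp -|\Im\lambda|^{2}/|x-\lambda|^{2}$ is incorrect (the actual second-order term is $-\tfrac12\Re(x^2/\lambda^2)$, which has no definite sign), and even if it held, the resulting sum $\sum_\lambda|\Im\lambda|^2/|x-\lambda|^2$ is not the quantity controlled by (ii). The balance and convergence issues you flag are real, and the paper's half-plane/Blaschke/multiplier construction is designed precisely to sidestep them.
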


Since the Poisson kernel at $\lambda$ in the corresponding half-plane  (upper
half-plane if $\Im\,\lambda>0$ and lower half-plane when  $\Im\,\lambda<0$) is
$P(\lambda,x)=\frac{|\Im\, \lambda|}{|x-\lambda|^2}$,  a restatement of
condition (ii) is that the measure  $\sum\limits_{\lambda\in\Lambda}
\delta_\lambda$ has bounded Poisson balayage.

The paper is structured as follows. In Section~\ref{analytic-conditions} we prove Theorem A and describe other analytic properties of $\B$-interpolation sequences which are relevant in the proof of Theorem B. For the sake of completeness, we briefly recall the proof of Theorem B in 
Section~\ref{sufsection}. 

A final remark about notation.  $C$ will always denote a positive constant and 
its actual value may change from one  occurrence to the next. $A= O(B)$ and 
$A\lesssim B$ mean that $A\leq c B$ for some $c>0$, and $A\simeq B$ is
$A\lesssim B\lesssim A$.

\section{Proof of the analytic conditions}\label{analytic-conditions}

Before the proof of Theorem A we need to spell out some of its consequences.

\begin{lemma}\label{weak-separation} Let $\Lambda$ be a discrete sequence in $\C$ for which (c) in Theorem A holds.
Then
\begin{itemize}
\item[(i)] There exist $\alpha,\varepsilon>0$ such that the discs $D_\lambda:=D(\lambda, \delta_\lambda)$, with
$\delta_\lambda=\varepsilon e^{-\alpha|\Im \lambda|}$, are pairwise disjoint.

\item[(ii)] There exist $C>0$ such that for all $\beta>2$
\[
\sup_{z\in\C}
\sum_{\lil}\frac{ e^{-C|\Im \lambda|}}{1+|z-\lambda|^\beta}<\infty\ .
\]
\end{itemize}
\end{lemma}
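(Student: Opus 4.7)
The plan is to exploit the peak functions $f_\lambda\in\B$ provided by condition (c) of Theorem A: $f_\lambda(\lambda)=1$, $f_\lambda(\lambda')=0$ for $\lambda'\in\Lambda\setminus\{\lambda\}$, and $|f_\lambda(z)|\le A e^{B|\Im\lambda|}e^{C|\Im z|}$ with constants $A,B,C>0$. Part (i) will follow from a Schwarz-lemma argument applied locally to $f_\lambda$, and part (ii) is then deduced from (i) by an area-packing estimate combined with a dyadic decomposition of $\Lambda$ about the base point.

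For (i), on the unit disc around $\lambda$ we have $|\Im z|\le|\Im\lambda|+1$, so $|f_\lambda(z)|\lesssim e^{(B+C)|\Im\lambda|}$. The holomorphic function $g(w):=f_\lambda(\lambda+w)-1$ vanishes at $0$ and is bounded by $\lesssim e^{(B+C)|\Im\lambda|}$ on $|w|\le 1$, so the Schwarz lemma yields
\[
|f_\lambda(z)-1|\lesssim e^{(B+C)|\Im\lambda|}\,|z-\lambda|\qquad (|z-\lambda|\le 1).
\]
Evaluated at any $\lambda'\in\Lambda\setminus\{\lambda\}$ with $|\lambda'-\lambda|\le 1$, this gives $1=|f_\lambda(\lambda')-1|\lesssim e^{(B+C)|\Im\lambda|}\,|\lambda-\lambda'|$, hence $|\lambda-\lambda'|\gtrsim e^{-(B+C)|\Im\lambda|}$. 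By symmetry the analogous bound holds with $|\Im\lambda'|$ in the exponent. Setting $\alpha:=B+C$ and taking $\varepsilon$ small enough (say $\varepsilon\le 1/2$), I get $\delta_\lambda+\delta_{\lambda'}\le|\lambda-\lambda'|$ for every pair of distinct points of $\Lambda$ (the case $|\lambda-\lambda'|>1$ is trivial since $\delta_\lambda\le\varepsilon$), so the discs $D_\lambda$ are pairwise disjoint.

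For (ii), part (i) furnishes pairwise disjoint discs of area $\pi\varepsilon^2 e^{-2\alpha|\Im\lambda|}$. Since $\delta_\lambda\le\varepsilon\le 1$, every $D_\lambda$ with $\lambda\in D(z,R)$ is contained in $D(z,R+1)$, so area comparison yields
\[
\sum_{\lambda\in\Lambda\cap D(z,R)} e^{-2\alpha|\Im\lambda|}\lesssim R^2\qquad (R\ge 1).
\]
Choosing the constant in (ii) to be $2\alpha$ and decomposing $\Lambda$ via the dyadic annuli $A_k(z):=D(z,2^k)\setminus D(z,2^{k-1})$ ($k\ge 1$), $A_0(z):=D(z,1)$, one obtains
\[
\sum_{\lil}\frac{e^{-2\alpha|\Im\lambda|}}{1+|z-\lambda|^\beta}\lesssim \sum_{k\ge 0}\frac{1}{1+2^{(k-1)\beta}}\sum_{\lambda\in A_k(z)}e^{-2\alpha|\Im\lambda|}\lesssim \sum_{k\ge 0}2^{k(2-\beta)},
\]
which converges uniformly in $z$ for every $\beta>2$.

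I do not anticipate any serious obstacle beyond calibrating the two parts: the radii $\delta_\lambda=\varepsilon e^{-\alpha|\Im\lambda|}$ in (i) must carry precisely the exponential factor dictated by the peak-function growth in Theorem A(c), so that the packing bound produces the weight $e^{-2\alpha|\Im\lambda|}$ in (ii), which is just summable against the dyadic factor $2^{-k\beta}$ when $\beta>2$. Once this matching is set up correctly, both assertions reduce to standard local estimates.
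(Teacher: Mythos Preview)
Your proof is correct and follows essentially the same strategy as the paper: for (i) you use the peak functions $f_\lambda$ and a local variation estimate (you invoke the Schwarz lemma, the paper uses the mean value inequality together with the Cauchy estimate for $f_\lambda'$; these are equivalent), and for (ii) you exploit the pairwise disjoint discs from (i).

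The only noteworthy difference is in the execution of (ii). You extract an area-packing bound $\sum_{\lambda\in D(z,R)}e^{-2\alpha|\Im\lambda|}\lesssim R^2$ and then sum dyadically in $R$. The paper instead compresses this into a single integral comparison: for $C\ge 2\alpha$ one has
\[
\sum_{\lambda\in\Lambda}\frac{e^{-C|\Im\lambda|}}{1+|z-\lambda|^\beta}
\;\lesssim\;
\sum_{\lambda\in\Lambda}\int_{D_\lambda}\frac{e^{-(C-2\alpha)|\Im\zeta|}}{1+|z-\zeta|^\beta}\,dm(\zeta)
\;\le\;
\int_{\C}\frac{dm(\zeta)}{1+|z-\zeta|^\beta}<\infty,
\]
using that the integrand is comparable to the summand on $D_\lambda$ (whose area is $\simeq e^{-2\alpha|\Im\lambda|}$) and that the $D_\lambda$ are disjoint. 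This is the same idea as your packing argument, just packaged without the dyadic scaffolding; your version has the mild advantage of making the role of the threshold $\beta>2$ explicit.
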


When $\Lambda$ satisfies (i) we say that $\Lambda$ is \emph{weakly separated}. 

\begin{proof} (i) Fix $\lil$ and consider any $\lambda'$ such that $|\lambda-\lambda'|\leq 1$. Then
\begin{align*}
1&=|f_\lambda(\lambda)-f_\lambda(\lambda')|\leq
\sup_{\zeta\in D(\lambda,1)}|f_\lambda^\prime(\zeta)| |\lambda-\lambda'|\ .
\end{align*}
By the Cauchy estimates
\begin{align*}
|f_\lambda^\prime(\zeta)|&\leq \sup_{\eta : |\eta-\zeta|=1} |f_\lambda(\eta)|\leq\sup_{z\in D(\lambda,2)} |f_\lambda(z)|
\leq\sup_{z\in D(\lambda,2)} A e^{-B|\Im \lambda|} e^{-C\alpha|\Im z|}\lesssim e^{-(B+C)|\Im \lambda|}\ ,
\end{align*}
which yields the desired estimate.

(ii) This is an immediate consequence of (i): if $C\geq 2\alpha$ we have
\begin{align*}
\sum_{\lil}\frac{ e^{-C|\Im \lambda|}}{1+|z-\lambda|^\beta}\lesssim
\sum_{\lil}\int_{D_\lambda}\frac{e^{-(C-2\alpha)|\Im \zeta|}}{1+|z-\zeta|^\beta}
dm(\zeta)\lesssim \int_{\C}\frac{dm(\zeta)}{1+|z-\zeta|^\beta}\ .
\end{align*}
\end{proof}

\begin{proof}[Proof of Theorem A]
$(a)\Rightarrow(b)$. Obvious.

$(b)\Rightarrow(c)$. Here we use the scheme of \cite[Lemma 2.2.6.]{Br-Ga}. Consider the  complete metric space $S=\bigl\{v=\{v_\lambda\}_\lambda : \|v\|_\infty\leq 1\bigr\}$,  and for $n\in\mathbb N$ let:
\[
S_n=\bigl\{\{v_\lambda\}_\lambda\in S : \exists f\in\B : f(\lambda)=v_\lambda, \ \lil, \textrm{and}\ 
\sup_{z\in\C} |f(z)|e^{-n|\Im z|}\leq n\bigr\}\ .
\]
By hypotheses $S=\cup_n S_n$ and by Baire's Cathegory Theorem there exists $n\in\mathbb N$ such that 
$\stackrel{\circ}{S_n}\neq\emptyset$. Since $0\in S_n$ for all $n$, there exists $\varepsilon\in(0,1)$ such that 
\[
\|v\|_\infty\leq\varepsilon\ \Longrightarrow\ v\in S_n\ .
\]
Let $v^\lambda=\{\varepsilon\delta_{\lambda,\lambda'}\}_{\lambda'}$. There exists then $g_\lambda\in\B$ with
$g_\lambda(\lambda')= \varepsilon\delta_{\lambda,\lambda'}$ and $\sup_z |g_\lambda(z)|e^{-n|\Im z|}\leq n$.
Taking $f_\lambda=g_\lambda/\varepsilon$ we get the desired properties.

$(c)\Rightarrow(d)$. Define $F(z)=\sum\limits_{\lil} w_\lambda(z) g_\lambda(z)$, where
\[
w_\lambda(z)=\left(\frac{\sin(z-\lambda)}{z-\lambda}\right)^3 e^{i\sigma_\lambda M(z-\lambda)}, \qquad
g_\lambda(z)=f_\lambda^2(z) \sin(z-\lambda)\ ,
\]
$M>0$ will be chosen later on 
and 
\[
\sigma_\lambda=
\begin{cases}
\ \ 1\quad &\textrm{if $\Im\lambda <0$}\\
-1 \quad &\textrm{if $\Im\lambda >0$.}
\end{cases}
\]
Notice that $w_\lambda$ is a holomorphic weight in $\B$ with $w_\lambda(\lambda)=1$, while $g_\lambda$ is a function in $\B$ vanishing at order 1 on $\lambda$ and at least at order 2 on all $\lambda'\neq \lambda$.

Let's see first that $F\in\B$. A direct estimate shows that 
\begin{align*}
|w_\lambda(z)|&\lesssim \frac{e^{(M+3)|\Im z|} e^{-(M-3)|\Im \lambda|}}{1+|z-\lambda|^3}\ ,\\
|g_\lambda(z)|&\lesssim e^{(2C+1)|\Im z|} e^{(2B+1)|\Im \lambda|}\ ,
\end{align*}
and therefore
\[
|F(z)|\lesssim e^{(M+2C+4)|\Im z|}\sum_{\lil} \frac{e^{-(M-2B-4)|\Im \lambda|}}{1+|z-\lambda|^3}\ .
\]
Taking $M$ big enough and applying Lemma~\ref{weak-separation}(ii) we see that $F\in\B$.

It is clear that $F(\lambda)=0$ for all $\lil$. On the other hand
\[
F'(\lambda)=\sum_{\lambda'\in\Lambda}w_{\lambda'}(\lambda) g_{\lambda'}^\prime(\lambda)=
w_{\lambda}(\lambda) g_{\lambda}^\prime(\lambda)=1\ .
\]

$(d)\Rightarrow(a)$. Let $\{v_\lambda\}_{\lil}$ satisfy \eqref{values} and define
\[
f(z)=\sum_{\lil} v_\lambda w_\lambda(z) \frac{F(z)}{F'(\lambda)(z-\lambda)}\ ,
\]
where $w_\lambda$ is defined as before. 

It is clear that $F(\lambda)=v_\lambda$ for all $\lil$, and a similar computation to the one in the previous implication shows that $f\in\B$, since $|F(z)/(F'(\lambda)(z-\lambda))|\lesssim e^{C(|\Im z|+|\Im \lambda|)}$.
\end{proof}

We finish this section by showing that a weakly separated union of two $\B$-interpolation sequences is still $\B$-interpolation.

\begin{theorem}\label{union}
 Let $\Lambda_1,\Lambda_2\in \Int\, \mathcal B$ be such that $\Lambda_1\cup\Lambda_2$ is weakly separated.  Then $\Lambda_1\cup\Lambda_2\in\Int\, \mathcal B$.
\end{theorem}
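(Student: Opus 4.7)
My plan is to verify condition (d) of Theorem~A for $\Lambda := \Lambda_1 \cup \Lambda_2$ via a product construction. Applying Theorem~A(d) separately to each $\Lambda_i$ yields $F_i \in \B$ vanishing simply on $\Lambda_i$ with $|F_i'(\lambda)| \ge \varepsilon e^{-C|\Im \lambda|}$ for every $\lambda \in \Lambda_i$. Since the weak separation of $\Lambda_1 \cup \Lambda_2$ forces the disjointness $\Lambda_1 \cap \Lambda_2 = \emptyset$, the product $F := F_1 F_2 \in \B$ has $\Lambda$ as its simple zero set. For $\nu \in \Lambda_1$ the Leibniz rule gives $F'(\nu) = F_1'(\nu) F_2(\nu)$, so combined with the factor bound for $F_1$ the whole problem reduces to the minimum-modulus estimate
\[
 |F_2(\nu)| \gtrsim e^{-C'|\Im\nu|}\qquad\text{for every }\nu \in \Lambda_1,
\]
and the symmetric bound for $F_1$ on $\Lambda_2$. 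Once these are in hand, $F$ realises condition (d) for $\Lambda$ and the implication $(d)\Rightarrow(a)$ of Theorem~A delivers $\Lambda \in \Int\,\B$.

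Thus the task reduces to the minimum-modulus estimate. The only geometric input available is the weak separation of $\Lambda$, which provides $\operatorname{dist}(\nu, \Lambda_2) \ge \varepsilon e^{-\alpha|\Im\nu|}$. Let $\mu_0 \in \Lambda_2$ be the point closest to $\nu$ and set $r := |\nu - \mu_0|$. Writing $F_2(z) = (z-\mu_0) H(z)$, the function $H$ is entire, $|H(\mu_0)| = |F_2'(\mu_0)| \gtrsim e^{-C|\Im\mu_0|}$, and by the Cauchy estimates $|H| \lesssim e^{C|\Im\zeta|}$ on any unit disc around $\mu_0$. Since $r$ is already bounded below the right way by weak separation, what remains is to transfer the lower bound at $\mu_0$ to a lower bound on $|H(\nu)|$.

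This transfer is the main technical obstacle. The remaining zeros of $H$, namely $\Lambda_2 \setminus \{\mu_0\}$, lie at distance at least $\delta_{\mu_0} = \varepsilon e^{-\alpha|\Im\mu_0|}$ from $\mu_0$, which is only comparable to $r$ when $r$ is small, so no single Harnack step on a disc free of zeros reaches $\nu$. My plan is to work on a disc $D(\mu_0, K r)$ with $K$ a fixed large constant: the disc-packing bound underlying Lemma~\ref{weak-separation}(i) shows that only a uniformly bounded number of points of $\Lambda_2$ lie in this disc, so after factoring those zeros out of $F_2$ the remaining factor $\tilde F$ is holomorphic and non-vanishing there, with $\log|\tilde F|$ harmonic and controlled above by $C|\Im\nu| + O(1)$ via the Cauchy bound on $F_2$ together with an elementary lower bound on the polynomial factor along the boundary circle. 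Harnack's inequality then transfers the lower bound at $\mu_0$ to a comparable lower bound at $\nu$, and a last use of weak separation of $\Lambda_1\cup\Lambda_2$ produces a matching lower bound on the polynomial factor at $\nu$. The case $r \ge 1$ is easier and handled separately on a unit disc around $\nu$ free of zeros of $F_2$. The delicate point throughout is the interplay between the Cauchy length scale (unit discs) and the weak-separation length scale ($\varepsilon e^{-\alpha|\Im\nu|}$); calibrating the constants in the Harnack step against the weak-separation exponent $\alpha$ is where I expect most of the technical work to lie.
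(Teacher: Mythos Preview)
Your product approach $F=F_1F_2$ is natural but differs from the paper's, and the minimum-modulus step has two real gaps. First, condition~(d) of Theorem~A only requires $F_2$ to vanish on $\Lambda_2$ with the derivative bound there; nothing prevents $F_2$ from having \emph{additional} zeros, possibly at or arbitrarily close to points of $\Lambda_1$. Hence the factor $\tilde F$ you obtain after dividing out the $\Lambda_2$-zeros in $D(\mu_0,Kr)$ need not be zero-free on that disc, and Harnack is unavailable; the same objection spoils the ``easy'' case $r\ge 1$, since $D(\nu,1)$ may well contain extraneous zeros of $F_2$. Second, even the count of $\Lambda_2$-points in $D(\mu_0,Kr)$ is not what you claim: weak separation only gives disjoint discs of radius $\simeq e^{-\alpha|\Im\nu|}$, so that count is bounded merely by $(Kr\,e^{\alpha|\Im\nu|})^2$, which for $r$ near $1$ is of order $e^{2\alpha|\Im\nu|}$, not $O(1)$. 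Factoring out that many zeros and then bounding the resulting polynomial from below at $\nu$ (each linear factor $\gtrsim e^{-\alpha|\Im\nu|}$) produces a loss doubly exponential in $|\Im\nu|$, far worse than the target $e^{-C|\Im\nu|}$. The estimate you are after may well be true for a \emph{suitably chosen} $F_2$, but the Harnack/packing route you outline does not establish it.

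The paper sidesteps both issues by verifying condition~(c) rather than~(d). For each $\nu\in\Lambda_1$ it \emph{builds} a function $g_\nu\in\B$ with $g_\nu(\nu)=1$, $g_\nu|_{\Lambda_2}=0$, and norm $\lesssim e^{C|\Im\nu|}/d(\nu,\Lambda_2)$; the peak function for $\Lambda_1\cup\Lambda_2$ at $\nu$ is then $f_\nu^{(1)}\,g_\nu$, where $f_\nu^{(1)}$ is the peak function for $\Lambda_1$. The construction (after translating $\nu$ to $0$) is $g_\nu=1-F\,G$, where $F$ is an explicit perturbed sine (Lemma~\ref{F}) with $F(0)=0$ and a \emph{known} lower bound $|F(\lambda)|\gtrsim \min(1,d(0,\Lambda_2))$ on $\Lambda_2$, and $G\in\B$ interpolates the bounded data $\{1/F(\lambda)\}_{\lambda\in\Lambda_2}$. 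The crucial lower-bound information is thus engineered into an explicit auxiliary function rather than extracted from an unspecified $F_2$.
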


For the proof of this result we will need the following lemma.

\begin{lemma}
 Let $\Lambda\in\Int\, \mathcal B$ and let $z_0\notin \Lambda$. There exist $A,B,C>0$ independent of $z_0$ and $f\in \mathcal B$ such that
\begin{itemize}
 \item [(i)] $f(z_0)=1$ and $f(\lambda)=0$ for all $\lil$.
 \item [(ii)] $\sup_{z\in\C}|f(z)|e^{-C|\Im z|}\leq B e^{-C|\Im z_0|}/d(z_0,\Lambda)$.
\end{itemize}
\end{lemma}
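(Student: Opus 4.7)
The plan is to use Theorem~A(d) to obtain $F\in\mathcal B$ with $F(\lambda)=0$ for all $\lil$, with $|F'(\lambda)|\geq \varepsilon e^{-C|\Im\lambda|}$, and with the Bernstein bound $|F(z)|\lesssim e^{\sigma|\Im z|}$. We may assume that $F$ has no zeros outside $\Lambda$, since any extra zeros can be factored out while keeping $F$ in $\mathcal B$. Because $z_0\notin\Lambda$ this forces $F(z_0)\neq 0$, and the natural candidate is
\[
f(z)=\frac{F(z)\,w(z)}{F(z_0)},\qquad w(z)=\left(\frac{\sin(z-z_0)}{z-z_0}\right)^{3}e^{i\sigma_{z_0}M(z-z_0)},
\]
with $\sigma_{z_0}=-\operatorname{sgn}(\Im z_0)$ and $M>0$ to be chosen large enough. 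Condition (i) is immediate: $w(z_0)=1$ gives $f(z_0)=1$, and $F\equiv 0$ on $\Lambda$ gives $f(\lambda)=0$ for every $\lil$.

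For (ii), the same calculation used in the proof of $(c)\Rightarrow(d)$ yields
\[
|w(z)|\;\lesssim\;\frac{e^{(M+3)|\Im z|}\,e^{-(M-3)|\Im z_0|}}{(1+|z-z_0|)^3},
\]
and, combined with $|F(z)|\lesssim e^{\sigma|\Im z|}$,
\[
|f(z)|\;\lesssim\;\frac{e^{(M+\sigma+3)|\Im z|}\,e^{-(M-3)|\Im z_0|}}{(1+|z-z_0|)^3\,|F(z_0)|}.
\]
Thus the whole problem reduces to establishing the pointwise lower bound
\[
|F(z_0)|\;\gtrsim\;d(z_0,\Lambda)\,e^{-C_1|\Im z_0|}
\]
for some absolute $C_1>0$. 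Once this is in place, choosing $M$ large enough (so that $M-3-C_1$ exceeds the target exponent) gives the desired estimate with some constants $B,C>0$ independent of $z_0$.

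The main obstacle is precisely this lower bound on $|F(z_0)|$. My approach is to let $\lambda_0\in\Lambda$ be the point closest to $z_0$ and factor $F(z)=(z-\lambda_0)\tilde F(z)$, where $\tilde F$ is entire, belongs to $\mathcal B$, and satisfies $\tilde F(\lambda_0)=F'(\lambda_0)$. Since $|F(z_0)|=d(z_0,\Lambda)\,|\tilde F(z_0)|$, it suffices to show $|\tilde F(z_0)|\gtrsim e^{-C_1|\Im z_0|}$. When $|z_0-\lambda_0|$ is smaller than a constant depending only on the Bernstein type of $\tilde F$, this follows from a Cauchy/Taylor estimate combined with $|\tilde F(\lambda_0)|=|F'(\lambda_0)|\geq\varepsilon e^{-C|\Im\lambda_0|}$ and $|\Im\lambda_0|\leq|\Im z_0|+1$. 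In the complementary regime $|z_0-\lambda_0|\geq r_0$, the bound has to be obtained from a Cartan-type minimum-modulus argument for $\tilde F$, using the Bernstein growth of $\tilde F$ together with the weak separation of $\Lambda\setminus\{\lambda_0\}$ supplied by Lemma~\ref{weak-separation}(i) and the fact that every $\lambda'\neq\lambda_0$ is at distance at least $d(z_0,\Lambda)$ from $z_0$ by minimality of $\lambda_0$.
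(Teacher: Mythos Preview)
Your approach has a genuine gap at the very first reduction. You write ``we may assume that $F$ has no zeros outside $\Lambda$, since any extra zeros can be factored out while keeping $F$ in $\mathcal B$''. This is not true in general: the paper itself stresses (right after Favorov's theorem in the introduction) that a subset of a zero set of $\mathcal B$ need not be a zero set of $\mathcal B$, because conditions~(a) and~(d) there encode a regularity that can be destroyed by removing points. Concretely, the $F$ produced by Theorem~A(d) may well vanish at $z_0$ itself or at many other points off $\Lambda$, and dividing them out can throw you out of $\mathcal B$ (or destroy the derivative lower bound). Without $\mathcal Z(F)=\Lambda$ you cannot even form $F(z)/F(z_0)$, so the whole construction collapses.

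Even granting $\mathcal Z(F)=\Lambda$, the lower bound $|F(z_0)|\gtrsim d(z_0,\Lambda)\,e^{-C_1|\Im z_0|}$ in the regime $|z_0-\lambda_0|\geq r_0$ is not established. A ``Cartan-type minimum-modulus argument'' gives bounds only outside exceptional disks whose size and number depend on local zero-counting data; turning this into a \emph{uniform} pointwise lower bound with constants independent of $z_0$ requires substantial extra input (density estimates for $\Lambda$, control of $F$ on large circles, etc.) that you do not supply.

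The paper proceeds quite differently and avoids both difficulties. It first reduces to $z_0=0$ by translation, then builds by hand a sine-type function $F\in\mathcal B$ (a perturbation of $\sin(\pi z)$ obtained by nudging finitely many zeros per unit interval) with $F(0)=0$ and $|F(\lambda)|\geq\varepsilon\min(1,|\lambda|)$ for all $\lambda\in\Lambda$; this is Lemma~\ref{F}. The bounded values $v_\lambda=1/F(\lambda)$ are then interpolated, using the hypothesis $\Lambda\in\Int\,\mathcal B$ directly together with the stability of interpolation, by some $G\in\mathcal B$ with $\sup_z|G(z)|e^{-A|\Im z|}\lesssim 1/d(0,\Lambda)$. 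The desired function is $f=1-FG$. Thus the paper exploits the interpolation hypothesis itself to manufacture the vanishing on $\Lambda$, rather than trying to control an abstract $F$ from Theorem~A(d).
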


\begin{proof}
 Assume first that $z_0=0$. Since $\Lambda$ is weakly separated there exists $\delta>0$ be such that $|\lambda-\lambda'|\geq 10\delta$ for all $\lambda,\lambda'\in\Lambda\cap\{|\Im z|<1\}$. 
 
\begin{lemma}\label{F}
There exists $F\in\B$ with $F(0)=0$ and such that 
\begin{align*}
 |F(\lambda)|&\geq \varepsilon\qquad\quad\textrm{if $|\lambda|\geq \delta$}\\
 |F(\lambda)|&\geq \varepsilon|\lambda|\qquad\textrm{if $|\lambda|< \delta$.}
\end{align*}
\end{lemma}

We deferr the proof of this technichal result to the end of the section.

Consider the values $v_\lambda=1/F(\lambda)$. Since $\sup_{\lil}|v_{\lambda}|\leq 1/\min(\varepsilon,\varepsilon d(0,\Lambda))$, there exist $A,B>0$ and $G\in\mathcal B$ such that $G(\lambda)=v_\lambda=1/F(\lambda)$ and $\sup_{z\in\C} |G(z)|e^{-A|\Im z|}\leq B/(\varepsilon d(0,\Lambda))$.

The function $f(z)=1-F(z)G(z)$ satisfies then the required properties.

For $z_0\notin\Lambda$ arbitrary apply the previous case to the sequence $\Lambda-z_0$. Notice that $\Lambda-z_0\in\Int \B$, since there exist the peak-functions required by Theorem A(c): just take $f_{\lambda-z_0}(z)=f_\lambda(z+z_0)$, where $f_\lambda$ are the peak-functions associated to $\Lambda$.

\end{proof}

\begin{proof}[Proof of Theorem~\ref{union}] Let us show that there exist peak-functions as in Theorem A(c). 

Fix $\lambda_1\in\Lambda_1$. Applying the previous Lemma to $\Lambda_2$ and $z_0=\lambda_1$ we see that there exist $A,B>0$ and $g_{\lambda_1}\in\B$ such that
\begin{align*}
 & g_{\lambda_1}(\lambda_1)=1\ ,\quad\textrm{and}\quad g_{\lambda_1}(\lambda_2)=0\ \forall\lambda_2\in\Lambda_2\\
&\sup_{z\in\C}|g_{\lambda_1}(z)|e^{-A|\Im z|}\leq B\frac{e^{C|\Im \lambda_1|}}{d(\lambda_1,\Lambda_2)}\leq B e^{C'|\Im \lambda_1|}\ .
\end{align*}
Since $\Lambda_1\in\Int \B$ there exist a functions $f_{\lambda_1}\in\B$, as given by Theorem A(c).
Then $h_{\lambda_1}=f_{\lambda_1}g_{\lambda_1}$ is a peak-function for $\Lambda_1\cup\Lambda_2$.

\end{proof}

\begin{proof}[Proof of Lemma~\ref{F}]
Consider
\[
\sin(\pi z)=\pi z\prod_{k\in\mathbb Z\setminus\{0\}}\left(1-\frac zk\right)\ .
\]
If $|\lambda-k|\geq \delta$ for all $\lil$ and $k\in\mathbb Z$, we can just take $F(z)=\sin(\pi z)$. 

For each $k\in\mathbb N$ such that there this $\lil$ with $|\lambda- k|< \delta$ or $|\lambda+ k|< \delta$ we replace in $\sin(\pi z)$ the factors $1-z/k$, $1+z/k$  by small perturbations $1-z/p_k$, $1-z/p_{-k}$, where
$p_k=k+i\epsilon_k$ and $\epsilon_k>0$ is chosen so that  
\begin{itemize}
\item[(a)] $|\epsilon_k|\leq 5\delta$

\item[(b)] $\epsilon_{-k}=\epsilon_k$

\item[(c)] $|p_{k}-\lambda|\geq\delta$ and $|p_{-k}-\lambda|\geq\delta$ for all $\lil$.
\end{itemize}
Denoting by $\mathcal I$ the indices of $\mathbb Z\setminus\{0\}$ affected by this modification, we define
\[
F(z)=\sin(\pi z)  \prod_{k\in\mathcal I} \frac{1-z/p_k}{1-z/k} \ .
\]
Notice that the zeros of this new sine-type function are at a distance of $\Lambda$ no smaller that $\delta$.

In order to see that $F\in\B$ is is enough to show that the latter product is bounded above and below away from the points $k$ and $p_k$. To do so we group the factors corresponding to $k$ and $-k$. Since $p_{-k}=-\overline{p_k}$ we have
\begin{align*}
\prod_{k\in\mathcal I}\frac{1-z/p_k}{1-z/k}&=
\prod_{k\in\mathcal I} \frac{z-p_k}{z-k} \frac k{p_k}=
\prod_{k\in\mathcal I\cap\mathbb N} \frac{(z-p_k)(z-p_{-k})}{(z-k)(z+k)}\frac{-k^2}{p_k p_{-k}}\\
&=\prod_{k\in\mathcal I\cap\mathbb N} \left( 1-\frac{\epsilon_k(\epsilon_k+2iz)}{z^2-k^2}\right)
\frac 1{1+\epsilon_k^2/k^2}\ ,
\end{align*}
from which the required properties follow easily. 
\end{proof}

\section{Proof of the geometric conditions}\label{sufsection}

In this section we briefly recall the proof of Theorem B.

\subsection{Necessity}
The necessity of (i) is a standard (and straightforward) application of Jensen's formula to the functions $f_\lambda$ provided by Theorem A(c) on the disks $D(\lambda,|\Im\lambda|)$.

To see that (ii) is also necessary consider the functions $g_\lambda(z)=f_\lambda(z) e^{iCz}$, which according  to Theorem A(c) are bounded
in the upper half-plane $\H$. Applying Jensen's formula  to $g_\lambda$ in $\H$ we have
\[
 \sum_{\lambda'\neq\lambda: \Im \lambda'>0}\log\left|\frac{\lambda-\lambda'}{\lambda-\bar\lambda'}\right|^{-1}\lesssim\Im \lambda
\qquad \textrm{for all} \quad \lambda \in\Lambda\cap\H\ .
\]
The estimate $1-t\leq\log t^{-1}$ for $t\in(0,1)$ yields then
\[
\sup_{\lambda\in\Lambda\cap\H} \sum_{\lambda'\neq\lambda: \Im \lambda'>0}\frac{\Im\lambda'}{|\lambda-\bar\lambda'|^2}< +\infty\ .
\]

Given $x\in\R$, consider the $\lambda\in\Lambda\cap\H$ closest to $x$. Then $\frac{\Im\lambda'}{|x-\bar\lambda'|^2}\leq 2
\frac{\Im\lambda'}{|\lambda-\bar\lambda'|^2}$. This and the corresponding computation for the lower half-plane imply (ii).

\subsection{Sufficency}

According to Theorem~\ref{union} it will be enough to show that $\Lambda_+=\Lambda\cap\{\Im z>0\}$ and $\Lambda_-=\Lambda\cap\{\Im z<0\}$ are $\B$-interpolation.

By Theorem~A(d), in order to prove that $\Lambda_+$ is $\B$-interpolation it
is enough to construct a function  $F\in \B$  such that $\Lambda_+\subset \mathcal
Z(F)$ and
\[
|F'(\lambda)|\geq \varepsilon e^{-K \Im \lambda}
\qquad \lambda\in\Lambda_+ 
\]
for some constants $\varepsilon, k>0$. 

Start with any entire function $G$ with zero ser $\mathcal Z(G)=\Lambda_+ $. Condition (ii)
implies that $\Lambda_+$ is a Blaschke sequence in $\H$,  i.e. the Blaschke product 
\[
B(z)=\prod_{\lambda\in\Lambda_+}\frac{z-\lambda}
{z-\bar\lambda}, \qquad z\in\H 
\]
converges.
Define
\[
\Psi(z)=N|\Im\, z|-\phi(z)\ ,
\]
where
\begin{equation*}
\phi(z)=
\begin{cases}
\displaystyle\log\Bigl|\frac{G(z)}{B(z)}\Bigr|\quad &\textrm{ $\Im\, z>0$}\\
\log|G(z)|\quad &\textrm{ $\Im\, z\leq0$.}
\end{cases}
\end{equation*}

A computation shows that
$\phi$ is harmonic outside the real axis, subharmonic on $\C$ and $\Delta\phi(x)=\sum\limits_{\lambda\in\Lambda_+}\frac{\Im\lambda}{|x-\lambda|^2}\, dx$ (see \cite[Lemma 10]{MOO}).
Thus, by condition (ii),
$\Delta\Psi\simeq dx$ when  $N\in\N$ is big enough. In this
situation, according to \cite[Lemma~3]{Or-Se},  there exists a multiplier
associated to $\Psi$, i.e., an entire function $h$ such that:
\begin{itemize}
\item[(a)] $\mathcal Z (h)$ is a separated sequence contained in $\R$ and separated from $\Lambda$ (i.e. 
$\inf\limits_{z\in\mathcal Z (h)} d(z,\Lambda)>0$).
\item[(b)] Given any $\varepsilon>0$, 
$|h(z)|\simeq \exp(\Psi(z))$ for all points $z$ such that $d(z,\mathcal Z(h))
>\varepsilon$.
\end{itemize}

Define now $F= h G$. It is clear that $F\in \B$:
\[
|F(z)|\lesssim e^{\Psi(z)+\log |G(z)|}\leq e^{\Psi(z)+\phi(z)}\leq 
e^{N p(z)}\qquad z\in\C .
\]
It is also clear that $\Lambda_+\subset\mathcal Z(F)$, since
$\Lambda_+\subset\mathcal Z(G)$. 

In order to prove that there exist $\varepsilon,C>0$ such that
\begin{equation}\label{cota} 
|F'(\lambda)|\geq\varepsilon
e^{-C \Im \lambda}
\end{equation}
consider the disjoint disks $D_\lambda=D(\lambda,\delta_\lambda)$,
$\delta_\lambda=\delta e^{-C\frac{p(\lambda)}{m_\lambda}}$
provided by Lemma~\ref{weak-separation}(i).
Since $\Lambda_+$ is far from $\mathcal Z(h)$, the estimate
\[
|G(z)|=|h(z)|e^{\phi(z)}|B(z)|\simeq e^{N|\Im\, z|}  |B(z)| 
\qquad z\in\partial D_\lambda
\]
holds.

From here we finish as in \cite{MOO}: since the hypotheses imply that
there exists $C>0$ such that $|B(z)|\geq\epsilon e^{-C |\Im z|}$, 
$z\in\partial D_\lambda$, we have
$|F(z)| \gtrsim  e^{-C p(z)} $ for all 
$z\in\partial D_\lambda$.
The function $g(z)=F(z)/(z-\lambda)$ is then 
holomorphic, non-vanishing in $D_\lambda$, and with $|g(z)|\gtrsim e^{-c
\Im \lambda}$ for $z\in\partial D_\lambda$. By the minimum principle 
\[
|F'(\lambda)|=|g(0)|\gtrsim e^{-c\Im \lambda},
\] 
as desired.


\providecommand{\bysame}{\leavevmode\hbox to3em{\hrulefill}\thinspace}
\providecommand{\MR}{\relax\ifhmode\unskip\space\fi MR }


\begin{thebibliography}{BRSHZE}


\bibitem[BrGa95]{Br-Ga} Berenstein, C.A. - Gay, R.   
\emph{Complex Analysis and Special Topics in Harmonic Analysis}, 
Springer Verlag, 1995. \MR{96j:30001}.

\bibitem[BrLi95]{Br-Li}
C.A. Berenstein and B.Q. Li, \emph{Interpolating varieties for 
spaces of meromorphic functions}, J. Geom. Anal.
\textbf{5} (1995), 1--48. \MR{96b:30064}.




\bibitem[Fa07]{Fa} Favorov, S. Ju.  
\emph{Zero sets of entire functions of exponential type with some conditions on the real axis}, 
Preprint, 2007. arXiv:math.CV/0701808v1.


\bibitem[MOO06]{MOO}
X. Massaneda, J. Ortega-Cerd{\`a} and M. Ouna\"{\i}es. \emph{A geometric characterization of interpolation in $\EP$}, Trans. Amer. Math. Soc.
  \textbf{358} (2006), 3459--3472. \MR{2006m:30063}.

\bibitem[OS99]{Or-Se}
J. Ortega-Cerd{\`a} and K. Seip. \emph{Multipliers for entire
  functions and an interpolation problem of {B}eurling}, J. Funct. Anal.
  \textbf{162} (1999), 400--415. \MR{2000c:30071}.
  



\end{thebibliography}
\end{document}